\documentclass[generic]{imsart}
\usepackage{enumerate}
\usepackage{amsthm}
\usepackage{amsfonts}
\usepackage{amsmath}
\usepackage{amssymb}
\usepackage{bm}

\bibliographystyle{imsart-nameyear}

\RequirePackage[T1]{fontenc}
\RequirePackage{amsthm,amsmath}
\usepackage{amsfonts}
\RequirePackage[numbers]{natbib}
\RequirePackage[colorlinks,citecolor=blue,urlcolor=blue]{hyperref}
\usepackage[utf8]{inputenc}

\pubyear{0000}
\volume{0}
\issue{0}
\firstpage{0}
\lastpage{0}

\startlocaldefs
\numberwithin{equation}{section}
\theoremstyle{plain}
\newtheorem{thm}{Theorem}[section]

\newtheorem{remk}{Remark}[section]
\newtheorem{prop}{Proposition}[section]

\newtheorem{lem}{Lemma}[section]
\newtheorem{corol}{Corollary}[section]

\endlocaldefs

\DeclareMathOperator*{\rank}{rank}

\def\Det#1{\left|#1\right|}

\begin{document}

\begin{frontmatter}

\title{
On the maximum likelihood degree of linear mixed models with two variance components}

\runtitle{Linear mixed  models with two variance components}

\begin{aug}
\author{\fnms{Mariusz} \snm{{Grządziel}\ead[label=e1]{mariusz.grzadziel@up.wroc.pl}}}

\address{
Department of Mathematics,\\
Wrocław University of Environmental and Life Sciences\\
Grunwaldzka 53, 50 357 Wrocław, Poland
\printead{e1}}

\runauthor{M.~Grządziel}

\affiliation{Wroc\l aw University of Environmental and Life Sciences}

\end{aug}

\begin{abstract}
We extend 
the results concerning the upper bounds for the maximum likelihood degree and the REML degree of the one-way random effects model presented in Gross \textit{et al.} [Electron. J. Stat. 6 (2012), pp.~993--1016] 
to the case of the normal linear mixed model 
with two variance components.
Then we 
prove that both parts of Conjecture~1 in the paper of Gross \textit{et al.}, which 
concerns a certain extension of the one-way random effects model, are true under fairly mild conditions. 
\end{abstract}

\begin{keyword}[class=AMS]
\kwd[Primary ]{62J05}
\kwd[; secondary ]{62F10}
\end{keyword}

\begin{keyword}
\kwd{Analysis of variance}
\kwd{variance component}
\kwd{linear mixed model}
\kwd{maximum likelihood}
\kwd{restricted maximum likelihood}
\end{keyword}
\tableofcontents
\end{frontmatter}

\section{Introduction}
The notion of the maximum likelihood (ML) degree of the statistical model
that has rational ML equations 
was introduced in \cite{CHKS06} and
\cite{HKS05}.
It can be defined as `the number of complex solutions to the likelihood equations when data are
generic'; a data set is generic `if it is not a part of the null set for which the number of complex solutions is different' (compare \cite[p.~995]{GDP12} and \cite[Chapter~2]{DSS08}). It may be interpreted as a measure of the computational complexity of the problem of solving the ML equations algebraically.

In the paper \cite{GDP12}, the authors considered 
the ML degree of variance component models. They also introduced the notion of the
restricted maximum likelihood 
(REML) degree of these models. The REML estimator of the vector of variance components 
can be defined as its ML estimator in the
appropriately modified model; the REML degree can be defined as the ML degree of this modified model.
The authors of \cite{GDP12} computed the ML degree and the REML degree for the one-way  random effects model
and formulated a conjecture concerning upper bounds for the ML degree and the REML degree of its extension, 
in which the mean structure is more general.

In this paper, we study the  linear mixed model with two variance components. 
We give upper bounds for the ML degree and for the REML degree of this model.
As a corollary, we obtain  conditions under which the part of  Conjecture~1 in \cite{GDP12}, which concerns the upper bound for the ML degree of the
mentioned earlier extension of the
one-way random effects model, holds, and the similar conditions  for the part of the conjecture
concerning the
REML degree of this model.
The obtained bounds reduce to those given in \cite[p.~996]{DSS08} in the case of the one-way random effects model.

The remaining sections of the paper are organized as follows. Basic facts
concerning the linear mixed model with two variance components are gathered in
Section \ref{section:ML_REML}.
The theorems giving upper bounds for the ML degree and the REML degree of this model are presented in Section \ref{sec:ML_Degree}
and in Section \ref{sec:REML_Degree}. The conditions under which the statements of the conjecture from \cite{GDP12} are true are discussed in Section \ref{section:GeneralMean}. The final conclusion is presented in Section \ref{sec:Conclusion}.

The following notation will be used: for a
matrix $A$,  $A^{\prime}$ will denote the transpose of $A$,
$A^{+}$ the Moore-Penrose inverse of $A$,
$\rank(A)$ the rank of $A$
and  $\mathcal{M}(A)$ the space spanned by the columns of $ A $.
The determinant of a square matrix $A$ will be denoted by
$\Det A$.
The notation $ [A_{1},A_{2}] $ will be used for a partitioned matrix consisting of two blocks:
$ A_{1} $ and $ A_{2} $.
The symbol
$I_{n}$ will stand for the 
identity matrix of order $n$,
$ \mathbf{0}_{n} $
for the $n$-dimensional column vector of zeros and
$ \mathbf{1}_{n} $
for the $n$-dimensional column vector of ones.

\section{Model with two variance components}
\label{section:ML_REML}

Let us consider the normal 
linear mixed model with two variance components
in which the dependent variable $ Y $ is an $n \times 1$ normally distributed vector with
\begin{equation} \label{model}
E(Y)=X\beta, \quad Cov(Y)=\Sigma(s)=\sigma_{1}^{2}V+\sigma_{2}^{2}I_{n},      
\end{equation}
where
$X$ is an $n \times p$ matrix of full rank, $p<n$,
$\beta$ is a $p \times 1$ parameter vector,
$ V $ is an $n \times n$ non-negative definite symmetric non-zero matrix of rank $k<n$
and $ s=(\sigma_{1}^{2},\sigma_{2}^{2})^{\prime}$ 
is an unknown vector of variance components belonging to
$ \mathcal{S}= \{s: \sigma_{1}^{2} \ge 0,\sigma_{2}^{2}>0\}$.
We will denote this model by
$\mathcal{N}(Y, X \beta, \Sigma(s))$;
for more information concerning this approach to representing the linear mixed models, see \cite[p.~7]{Jia07}.

The twice the log-likelihood function is given, up to an additive constant, by 
\begin{equation} \label{eq:Loglikelihood}
l_{0}(\beta,s,Y):=-\log\Det{\Sigma(s)}-(Y-X\beta)^{\prime}\Sigma^{-1}(s)(Y-X\beta). 
\end{equation}
The ML estimator of $ (\beta,s) $ is defined as the maximizer of $ l_{0}(\beta,s,Y) $ over $ (\beta,s) \in \mathbb{R}^{n} \times \mathcal{S}$. 

Let  $M:=I_{n}-XX^{+}$. It can be shown that
\[
l_{0}(\beta,s,Y) \le l_{0}(\tilde \beta,s,Y)=-\log\Det{\Sigma(s)} - Y^{\prime} R(s)Y,
\]
where $R(s):= (M\Sigma(s)M)^{+}$
and $\tilde \beta(s):=(X^{\prime}\Sigma^{-1}(s)X)^{-1}X^{\prime}\Sigma^{-1}(s)Y$. It can be checked that $l_{0}(\beta,s,Y) < l_{0}(\tilde \beta,s,Y)  $ for $\beta \neq \tilde \beta $.
It can be thus seen 
that the problem of computing the ML estimator of $ (\beta,s) $ reduces to finding the maximizer of $l(s,Y):=-\log\Det{\Sigma(s)} - Y^{\prime} R(s)Y$
over $s \in \mathcal{S} $,
which we will refer to as the ML estimator of $ s $,
compare \cite[p.~230]{RK88} and \cite[284]{GSTU02}.  
It can be also observed that
for a given value $ y $ of the vector $ Y$ the ML estimate of $ s $ exists if and only if the ML estimate of $ (\beta,s)  $ exists.

It can be verified that
the complex solutions to
the ML equations obtained by equating
$ \partial l_{0}(\beta,s,Y)/\partial \beta $,
$ \partial l_{0}(\beta,s,Y)/\partial \sigma_{1}^{2} $ and $ \partial l_{0}(\beta,s,Y)/\partial \sigma_{2}^{2} $ to $ 0 $
(or to $ \mathbf{0}_p$)
are of the form
$ ( \tilde{\beta}(\hat{s}),\hat{s} ) $, where
$ \hat{s} $ is a complex solution to 
the system
\begin{equation} \label{eq:ML} 
\frac{\partial l(s,Y)}{\partial \sigma_{1}^{2}}=0, \quad \frac{\partial l(s,Y)}{\partial \sigma_{2}^{2}}=0.
\end{equation}
Here the partial derivatives are computed with respect to the real variables $\sigma_{1}^{2}$,
$\sigma_{2}^{2}  $ and $ \beta $.
It can be also checked that if $ \hat{s} $ is a solution to 
(\ref{eq:ML}), then $ ( \tilde{\beta}(\hat{s}),\hat{s} ) $ is a solution to the ML equations. 
The ML degree of the model (\ref{model}) is thus equal to the number of complex solutions  to the system (\ref{eq:ML}) when the data are  generic.

The necessary and sufficient condition for the existence of the ML estimate of $ s $ is given
in the following

\begin{thm}
For a given value $ y $ of the vector $ Y $, the ML estimate of
$ s $ exists if and only if
\begin{equation} \label{eq:condML}
y \notin \mathcal{M}([X,V]).
\end{equation}	
\end{thm}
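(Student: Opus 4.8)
The plan is to profile out the overall variance scale (the mean being already profiled out in the text) and to reduce the existence question to the behaviour of a one–variable function as the variance ratio tends to infinity. Put $\gamma=\sigma_{1}^{2}/\sigma_{2}^{2}\in[0,\infty)$ and $\tau=\sigma_{2}^{2}\in(0,\infty)$, so that $\Sigma(s)=\tau W(\gamma)$ with $W(\gamma):=\gamma V+I_{n}$, $\Det{\Sigma(s)}=\tau^{n}\Det{W(\gamma)}$ and $R(s)=\tau^{-1}(MW(\gamma)M)^{+}$. Writing $g(\gamma):=Y^{\prime}(MW(\gamma)M)^{+}Y$, the objective $l(s,Y)$ becomes $-n\log\tau-\log\Det{W(\gamma)}-\tau^{-1}g(\gamma)$. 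Since $MW(\gamma)M=\gamma MVM+M$ is positive definite on $\mathcal{M}(M)$ for every $\gamma\ge 0$, its pseudoinverse annihilates $\mathcal{M}(X)$, so $g(\gamma)=(MY)^{\prime}(MW(\gamma)M)^{+}(MY)$; hence $g(\gamma)>0$ for all $\gamma$ when $MY\neq\mathbf{0}_{n}$, while $g\equiv 0$ when $Y\in\mathcal{M}(X)$. In the latter case $l\to+\infty$ as $\tau\to 0^{+}$, so no ML estimate exists; as $\mathcal{M}(X)\subseteq\mathcal{M}([X,V])$ this agrees with the claim, and it remains to treat $Y\notin\mathcal{M}(X)$.

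For $Y\notin\mathcal{M}(X)$ the maximum over $\tau$ is attained at $\tau=g(\gamma)/n$, yielding (up to a constant) the profile function $\ell(\gamma):=-n\log g(\gamma)-\log\Det{W(\gamma)}$ to be maximised over $\gamma\in[0,\infty)$. This $\ell$ is continuous and finite there, so the ML estimate fails to exist exactly when $\ell(\gamma)\to+\infty$ as $\gamma\to\infty$. Writing $B$ for the restriction of $MVM$ to $\mathcal{M}(M)$, the operator $MW(\gamma)M$ acts there as $\gamma B+I$, and the spectral decomposition of $B$ gives $g(\gamma)=\sum_{j}c_{j}^{2}/(1+\gamma\mu_{j})$, where $\mu_{j}\ge 0$ are the eigenvalues of $B$ and $c_{j}$ the coordinates of $MY$ in an eigenbasis. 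Consequently $\lim_{\gamma\to\infty}g(\gamma)=\sum_{\mu_{j}=0}c_{j}^{2}=\|P\,MY\|^{2}$, with $P$ the orthogonal projection onto $\ker B$, and this limit vanishes iff $MY\in\mathcal{M}(MVM)$.

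The decisive algebraic step, which I expect to be the main obstacle, is the identity $\mathcal{M}(MV)=\mathcal{M}(MVM)$. The inclusion $\mathcal{M}(MVM)\subseteq\mathcal{M}(MV)$ is trivial; for the reverse I would pass to kernels of the transposes and use that $V$ is non-negative definite: if $MVMx=\mathbf{0}_{n}$ and $z:=Mx$, then $MVz=\mathbf{0}_{n}$, so $Vz\in\mathcal{M}(X)$, while $z\in\mathcal{M}(X)^{\perp}$, whence $z^{\prime}Vz=0$ and therefore $Vz=\mathbf{0}_{n}$; thus $\ker(MVM)\subseteq\ker(VM)$, which gives the claimed equality of ranges. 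Since the columns of $X$ lie in $\ker M$, one has $MY\in\mathcal{M}(MV)\iff Y\in\mathcal{M}([X,V])$, and combining this with the previous paragraph shows that $\lim_{\gamma\to\infty}g(\gamma)=0$ is equivalent to $Y\in\mathcal{M}([X,V])$.

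Finally I would compare growth rates through $\log\Det{W(\gamma)}=\sum_{i=1}^{k}\log(1+\gamma\lambda_{i})\sim k\log\gamma$, where $\lambda_{1},\dots,\lambda_{k}>0$ are the nonzero eigenvalues of $V$. If $Y\notin\mathcal{M}([X,V])$ then $g(\gamma)$ tends to a positive limit, so $\ell(\gamma)\sim-k\log\gamma\to-\infty$ and the maximum is attained, i.e.\ the ML estimate exists. If instead $Y\in\mathcal{M}([X,V])\setminus\mathcal{M}(X)$ then every $c_{j}$ with $\mu_{j}=0$ vanishes, so $g(\gamma)\sim\gamma^{-1}\sum_{\mu_{j}>0}c_{j}^{2}/\mu_{j}=c/\gamma$ with $c>0$, whence $-n\log g(\gamma)\sim n\log\gamma$ and $\ell(\gamma)\sim(n-k)\log\gamma\to+\infty$, the divergence being guaranteed by the hypothesis $k<n$. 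In this case the supremum is not attained, and together with the case $Y\in\mathcal{M}(X)$ this establishes that the ML estimate of $s$ exists if and only if $y\notin\mathcal{M}([X,V])$.
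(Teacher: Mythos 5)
Your proof is correct, but it takes a genuinely different route from the paper: the paper's entire proof is a citation ("immediate consequence of Theorem 3.1 in \cite{DM99}"), whereas you give a self-contained analytic argument. Concretely, you reparametrize by $(\gamma,\tau)=(\sigma_1^2/\sigma_2^2,\sigma_2^2)$, profile out $\tau$ exactly (legitimate since $g(\gamma)>0$ once $y\notin\mathcal{M}(X)$, so the inner supremum is attained), and reduce everything to the behaviour of $\ell(\gamma)=-n\log g(\gamma)-\log\Det{W(\gamma)}$ as $\gamma\to\infty$. The two pillars of your argument check out: the algebraic identity $\mathcal{M}(MV)=\mathcal{M}(MVM)$ (your kernel argument correctly exploits non-negative definiteness of $V$ via $z^{\prime}Vz=0\Rightarrow Vz=\mathbf{0}_n$), and the growth comparison $\ell(\gamma)\sim(n-k)\log\gamma\to+\infty$ versus $\ell(\gamma)\sim-k\log\gamma\to-\infty$, where the hypothesis $k=\rank(V)<n$ is exactly what forces divergence in the degenerate case. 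What your approach buys is transparency: it exposes the mechanism of non-existence (the likelihood escapes to $+\infty$ either along $\tau\to0^{+}$ when $y\in\mathcal{M}(X)$, or along $\gamma\to\infty$ when $y\in\mathcal{M}([X,V])\setminus\mathcal{M}(X)$), essentially reconstructing the Demidenko--Massam argument in this special case; what the paper's citation buys is brevity and coverage by a more general result. Two minor points to fix in your write-up: you recycle the symbol $B$ for the restriction of $MVM$ to $\mathcal{M}(M)$, while the paper reserves $B$ for the matrix in (\ref{eq:B}); and your intermediate claim that non-existence holds ``exactly when $\ell(\gamma)\to+\infty$'' is loosely phrased (a continuous function can fail to attain a finite supremum without diverging), though this is harmless here because your case analysis shows that only the limits $-\infty$ and $+\infty$ actually occur.
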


\begin{proof}
The theorem is an immediate consequence of Theorem 3.1 in \cite{DM99}.
\end{proof}

\begin{corol} \label{cor:NullSet}
The ML estimate of $ s $ exists with probability $ 1 $ if and only if
\begin{equation} \label{cond:ML_prob}
\mathcal{M}([X,V]) \subsetneq  \mathbb{R}^{n}.
\end{equation}
\end{corol}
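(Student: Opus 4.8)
The plan is to read the corollary off directly from the preceding Theorem. By that theorem, for a fixed data vector $y$ the ML estimate of $s$ exists if and only if $y \notin \mathcal{M}([X,V])$. Hence the statement that the ML estimate exists with probability one is equivalent to the statement that $P(Y \in \mathcal{M}([X,V])) = 0$, where the probability is computed under the true law $\mathcal{N}(Y, X\beta, \Sigma(s))$ of the data with $s \in \mathcal{S}$. The whole argument therefore reduces to deciding when the linear subspace $\mathcal{M}([X,V])$ is a null set for this Gaussian law.

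For the ``if'' direction I would assume $\mathcal{M}([X,V]) \subsetneq \mathbb{R}^{n}$. The set $\mathcal{M}([X,V])$ is a linear subspace, and being proper it has dimension strictly smaller than $n$ and hence Lebesgue measure zero. The key observation is that the covariance matrix $\Sigma(s)=\sigma_{1}^{2}V+\sigma_{2}^{2}I_{n}$ is nonsingular: since $V$ is non-negative definite and $\sigma_{2}^{2}>0$, every eigenvalue of $\Sigma(s)$ is at least $\sigma_{2}^{2}>0$. Consequently the distribution of $Y$ is absolutely continuous with respect to Lebesgue measure on $\mathbb{R}^{n}$, so every set of Lebesgue measure zero is a null set for the law of $Y$; in particular $P(Y \in \mathcal{M}([X,V])) = 0$. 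By the Theorem the ML estimate then exists with probability one.

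For the reverse implication I would argue by contraposition. If $\mathcal{M}([X,V]) = \mathbb{R}^{n}$, then every realization of $Y$ lies in $\mathcal{M}([X,V])$, so the event $\{Y \notin \mathcal{M}([X,V])\}$ is empty; by the Theorem the ML estimate fails to exist for every realization and so does not exist with probability one. Combining the two directions yields the claimed equivalence. This proof is essentially measure-theoretic and presents no genuine obstacle; the only point requiring care is the nonsingularity of $\Sigma(s)$, which rests on the standing restriction $\sigma_{2}^{2}>0$ encoded in $\mathcal{S}$ and is precisely what allows one to pass from ``Lebesgue measure zero'' to ``probability zero'', thereby identifying proper subspaces with null sets of the Gaussian law.
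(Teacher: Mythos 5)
Your proof is correct and matches the intended argument: the paper states this corollary without proof, treating it as an immediate consequence of the preceding theorem, and your measure-theoretic fill-in (nonsingularity of $\Sigma(s)$ from $\sigma_{2}^{2}>0$, hence absolute continuity of the law of $Y$, hence proper subspaces are null sets, plus the trivial contrapositive for the converse) is exactly the reasoning being left implicit.
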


Let $B$ be an $(n-p) \times n$ matrix satisfying the conditions
\begin{equation} \label{eq:B}
BB^{\prime}=I_{n-p}, \quad B^{\prime}B=M.
\end{equation}
The ML estimator of $s$ in the 
model  $\mathcal{N}(z,\mathbf{0}_{n-p},\sigma_{1}^{2}BVB^{\prime}+ \sigma_{2}^{2}I_{n-p})$,
where $z:=BY$,
is known in the literature as the restricted maximum likelihood (REML) estimator of $s$ (compare e.g. \cite[p.~291]{GSTU02} or \citep[p.~880]{OSB76}).

Let
\begin{equation} \label{eq:BVBprime}
BVB^{\prime}=\sum_{i=1}^{d-1}m_{i}E_{i}
\end{equation}
be the spectral decomposition of $BVB^{\prime} $, where $ m_{1}> \ldots > m_{d-1} >m_{d}=0$ 
denotes the decreasing sequence
of distinct
eigenvalues of $BVB^{\prime}$
and $ E_{i} $'s are orthogonal projectors satisfying the condition $E_{i}E_{j}=\mathbf{0}_{n-p}$ for $ i \neq j $. Let $ E_{d} $ be such that $\sum_{i=1}^{d}E_{i}=I_{n-p}$ and let
\begin{equation} \label{eq:Ti}
 T_{i}:=z^{\prime}E_{i}z/\nu_{i}, \quad z:=BY, 
\end{equation}
where $\nu_{i}$ denotes the multiplicity  of the eigenvalue $ m_{i}$, $ i=1,\ldots,d$. Let us note that
the quantities  $ m_{i}$, $E_{i}$, and $ \nu_{i}$, $ i=1,\ldots,d$, do not depend on the choice of $B$ in (\ref{eq:B}), see \cite[p.~880]{OSB76}. Throughout the paper we will assume that  
\begin{equation} \label{eq:nu}
\nu_{d}>0.
\end{equation}
The random variables $T_{i}$'s
are mutually independent and 
$\nu_{i}(m_{i} \sigma^{2}_{1} +\sigma_{2}^{2}     )^{-1} T_{i}$
has a central chi-squared distribution with $\nu_{i}$ degrees of freedom, $ i=1,\ldots,d$, compare \cite[p.~880]{OSB76}.

It can be shown that
\begin{thm}
	For a given value $ y $ of the vector $ Y $
in the model (\ref{model}),	
the REML estimate of
$ s $ exists if and only if
	\begin{equation} \label{eq:condREML}
      My \notin \mathcal{M}(MV).
	\end{equation}	
\end{thm}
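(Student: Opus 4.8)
The plan is to reduce the statement to the existence criterion already established for the ML estimate and then to translate the resulting condition on $z=By$ into the asserted condition on $My$. First I would recall that the REML estimate of $s$ is, by definition, the ML estimate of $s$ in the model $\mathcal{N}(z,\mathbf{0}_{n-p},\sigma_{1}^{2}BVB^{\prime}+\sigma_{2}^{2}I_{n-p})$, whose mean vector is identically zero; under the standing assumption $\nu_{d}>0$ the matrix $BVB^{\prime}$ has eigenvalue $0$ of multiplicity $\nu_{d}$, hence $\rank(BVB^{\prime})<n-p$, so the hypotheses behind the existence criterion are met. Running the argument of Theorem~2.1 (which rests on Theorem~3.1 in \cite{DM99}) for this model, in which the role of the mean structure $X$ is trivial, the ML estimate of $s$ exists if and only if $z\notin\mathcal{M}([\mathbf{0}_{n-p},BVB^{\prime}])=\mathcal{M}(BVB^{\prime})$, that is, if and only if $By\notin\mathcal{M}(BVB^{\prime})$.

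It then remains to prove the equivalence $By\in\mathcal{M}(BVB^{\prime})\iff My\in\mathcal{M}(MV)$, for which I would exploit the two defining relations $BB^{\prime}=I_{n-p}$ and $B^{\prime}B=M$ in (\ref{eq:B}); these give in particular $BM=B$ and $BVM=BVB^{\prime}B$. For the forward implication, if $By=BVB^{\prime}c$ then multiplying on the left by $B^{\prime}$ and using $B^{\prime}B=M$ yields $My=MVB^{\prime}c\in\mathcal{M}(MV)$. For the converse one starts from $My=MVu$, multiplies by $B$ and uses $BM=B$ to obtain $By=BVu$; the difficulty is that the term $BV(I_{n}-M)u$ need not lie in $\mathcal{M}(BVB^{\prime})$, so this direct manipulation does not suffice.

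The crux, and the step I expect to be the main obstacle, is therefore the linear-algebraic lemma $\mathcal{M}(MVM)=\mathcal{M}(MV)$, which is where the non-negative definiteness of $V$ is essential. The inclusion $\mathcal{M}(MVM)\subseteq\mathcal{M}(MV)$ is immediate. For the reverse I would pass to orthogonal complements: since $M$ and $MVM$ are symmetric, $\mathcal{M}(MVM)^{\perp}=\ker(MVM)$ and $\mathcal{M}(MV)^{\perp}=\ker((MV)^{\prime})=\ker(VM)$, so it is enough to show $\ker(MVM)=\ker(VM)$. The nontrivial direction assumes $MVMx=0$; setting $w=Mx$, so that $w\in\mathcal{M}(M)$ and $Mw=w$, the hypothesis reads $MVw=0$, i.e. $Vw\in\mathcal{M}(M)^{\perp}$, whence $w^{\prime}Vw=\langle w,Vw\rangle=0$, and since $V\ge 0$ this forces $Vw=VMx=0$. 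This establishes $\mathcal{M}(MV)=\mathcal{M}(MVM)$. With the lemma in hand the converse closes: from $My\in\mathcal{M}(MV)=\mathcal{M}(MVM)$ we may write $My=MVMc$, so that $By=BMy=BVMc=BVB^{\prime}(Bc)\in\mathcal{M}(BVB^{\prime})$. Combining the two implications with the existence criterion yields that the REML estimate exists if and only if $By\notin\mathcal{M}(BVB^{\prime})$, equivalently $My\notin\mathcal{M}(MV)$, which is precisely (\ref{eq:condREML}).
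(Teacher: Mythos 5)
Your proof is correct, but it takes a genuinely different route from the paper's. The paper disposes of this theorem in a single line, citing Theorem 3.4 of Demidenko and Massam \cite{DM99}, which already states the REML existence criterion in the form $My \notin \mathcal{M}(MV)$. You instead start from the paper's definition of the REML estimator (the ML estimator of $s$ in the transformed model $\mathcal{N}(z,\mathbf{0}_{n-p},\sigma_{1}^{2}BVB^{\prime}+\sigma_{2}^{2}I_{n-p})$), apply the ML existence criterion to that zero-mean model to obtain the condition $By \notin \mathcal{M}(BVB^{\prime})$, and then prove the purely linear-algebraic equivalence $By \in \mathcal{M}(BVB^{\prime}) \iff My \in \mathcal{M}(MV)$. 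Your key lemma $\mathcal{M}(MVM)=\mathcal{M}(MV)$ is correctly proved: passing to kernels and using $w^{\prime}Vw=0 \Rightarrow Vw=0$ for non-negative definite $V$ is exactly where definiteness enters, and you rightly flag that the naive manipulation $By=BVu$ does not close the converse without it (the identities $BM=B$, $B^{\prime}B=M$ are used correctly throughout). What your route buys: it needs only the ML existence result (the paper's Theorem 2.1, i.e.\ Theorem 3.1 in \cite{DM99}) rather than the separate REML theorem of \cite{DM99}, and it makes transparent why the criterion can be phrased in terms of $M$, hence independently of the choice of $B$ in (\ref{eq:B}). What the paper's route buys: brevity, since the cited result delivers the $M$-form directly. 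Two small caveats in your write-up, neither fatal: applying Theorem 2.1 to a model with identically zero mean is formally a $p=0$ case not literally covered by the hypotheses of model (\ref{model}), so one should either invoke the no-fixed-effects case of \cite{DM99} explicitly or remark that the profile-likelihood reduction degenerates harmlessly; and you implicitly need $BVB^{\prime}\neq \mathbf{0}$ (i.e.\ $d\ge 2$) for the transformed model to be a genuine two-variance-component model, which the spectral decomposition (\ref{eq:BVBprime}) tacitly assumes.
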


\begin{proof}
	The theorem follows directly from Theorem 3.4 in \cite{DM99}.
 \end{proof}

\begin{prop}
The REML estimate of $s$ in the model (\ref{model}) exists with probability one if and only if
\begin{equation} \label{cond:REML_P1}
\mathcal{M}(MV) \subsetneq \mathcal{M}(M).
\end{equation}
\end{prop}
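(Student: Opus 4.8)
The plan is to combine the existence criterion \eqref{eq:condREML} with a measure-zero argument for the Gaussian law of $MY$. By that criterion the REML estimate fails to exist for the observed $y$ exactly when $My \in \mathcal{M}(MV)$, so the REML estimate exists with probability one if and only if
\[
P\bigl(MY \in \mathcal{M}(MV)\bigr)=0 .
\]
First I would record the elementary inclusion $\mathcal{M}(MV)\subseteq\mathcal{M}(M)$, which holds because $M(MV)=M^{2}V=MV$; consequently \eqref{cond:REML_P1} is nothing but the assertion that $\mathcal{M}(MV)$ is a \emph{proper} subspace of $\mathcal{M}(M)$, and the whole statement reduces to comparing the support of the law of $MY$ with the subspace $\mathcal{M}(MV)$.

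The second step is to describe the distribution of $MY$. Since $MX = X - XX^{+}X = 0$, the vector $MY$ is Gaussian with mean $\mathbf{0}_{n}$ and covariance $M\Sigma(s)M=\sigma_{1}^{2}MVM+\sigma_{2}^{2}M$; in particular its law does not depend on $\beta$. Restricted to the $(n-p)$-dimensional space $\mathcal{M}(M)$, on which the orthogonal projector $M$ acts as the identity, this covariance operator equals $\sigma_{1}^{2}MVM+\sigma_{2}^{2}I$, which is positive definite because $\sigma_{2}^{2}>0$ for every $s\in\mathcal{S}$ and $MVM$ is non-negative definite. Hence $MY$ possesses a strictly positive density with respect to Lebesgue measure on $\mathcal{M}(M)$; equivalently, in the coordinates $z=BY$ of \eqref{eq:B} the vector $z\sim\mathcal{N}(\mathbf{0}_{n-p},\sigma_{1}^{2}BVB^{\prime}+\sigma_{2}^{2}I_{n-p})$ is a non-degenerate $(n-p)$-dimensional Gaussian.

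The third step is the two implications. If $\mathcal{M}(MV)\subsetneq\mathcal{M}(M)$, then $\mathcal{M}(MV)$ is a proper linear subspace of $\mathcal{M}(M)$ and therefore a Lebesgue-null set inside $\mathcal{M}(M)$; since $MY$ is absolutely continuous on $\mathcal{M}(M)$, we obtain $P(MY\in\mathcal{M}(MV))=0$, so by \eqref{eq:condREML} the REML estimate exists with probability one. Conversely, if \eqref{cond:REML_P1} fails then, by the inclusion above, $\mathcal{M}(MV)=\mathcal{M}(M)$; but $MY\in\mathcal{M}(M)$ holds identically, so $P(MY\in\mathcal{M}(MV))=1$ and the REML estimate fails to exist with probability one. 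This proves the equivalence.

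The only genuinely delicate point is the second step: one must verify that the Gaussian law of $MY$ is non-degenerate on the \emph{full} $(n-p)$-dimensional subspace $\mathcal{M}(M)$, that is, that $M\Sigma(s)M$ restricts to a strictly positive definite operator there. This is exactly where the condition $\sigma_{2}^{2}>0$ built into $\mathcal{S}$ is essential; were $\sigma_{2}^{2}$ allowed to vanish, the support of $MY$ could collapse into $\mathcal{M}(MV)$ and the measure-zero argument would break down. Everything else is routine linear algebra together with the standard fact that a full-rank Gaussian assigns zero mass to any proper linear subspace.
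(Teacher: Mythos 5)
Your proof is correct and follows essentially the same route as the paper: both invoke the existence criterion \eqref{eq:condREML} and then reduce the claim to the fact that the Gaussian law of the projected data is non-degenerate (thanks to $\sigma_{2}^{2}>0$), so a proper linear subspace carries zero probability. The only cosmetic difference is that you argue intrinsically on $\mathcal{M}(M)$ with its Lebesgue measure, while the paper passes to the coordinates $z=BY$ on $\mathbb{R}^{n-p}$ --- an equivalence you also note yourself.
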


\begin{proof}
It can be seen that
$ \mathcal{M}(MV) \subsetneq \mathcal{M}(M) $
 if and only if 
$ \mathcal{M}(BV) \subsetneq \mathbb{R}^{{n-p} }$.
Since  $BY \sim N(\mathbf{0}_{n-p},\sigma_{1}^{2}BVB^{\prime}+ \sigma_{2}^{2}I_{n-p})$,
we have: $P(MY \notin \mathcal{M}(MV))=
P(BY \notin \mathcal{M}(BV))=1$
if and only if (\ref{cond:REML_P1})
holds.

\end{proof}

\begin{prop}
In the model (\ref{model}):

\begin{enumerate}[(a)]
\item
The condition (\ref{cond:ML_prob}) implies
the condition (\ref{cond:REML_P1}).
\item
The condition (\ref{cond:REML_P1})
is equivalent to the condition (\ref{eq:nu}).

\end{enumerate}
\end{prop}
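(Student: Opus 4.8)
The plan is to reduce all three conditions to a single statement about the subspace $W := \mathcal{M}(X)^{\perp} \cap \ker V = \{w \in \mathbb{R}^{n} : X'w = \mathbf{0}_{p},\ Vw = \mathbf{0}_{n}\}$, namely that $W \neq \{\mathbf{0}_{n}\}$. Recall that $\mathcal{M}(X)^{\perp} = \mathcal{M}(M)$, so $w \in \mathcal{M}(X)^{\perp}$ amounts to $Mw = w$; and since $V$ is symmetric, $\ker V = \mathcal{M}(V)^{\perp}$, so $Vw = \mathbf{0}_{n}$ amounts to $w \perp \mathcal{M}(V)$. With these translations both parts become short linear-algebra arguments.

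For part (a), I would first rewrite (\ref{cond:ML_prob}) as $\mathcal{M}([X,V])^{\perp} \neq \{\mathbf{0}_{n}\}$; a nonzero vector orthogonal to every column of $[X,V]$ is precisely a nonzero $w$ with $X'w = \mathbf{0}_{p}$ and $Vw = \mathbf{0}_{n}$, i.e.\ $W \neq \{\mathbf{0}_{n}\}$. Given such a $w$, it lies in $\mathcal{M}(M)$ and is orthogonal to $\mathcal{M}(MV)$, because for every $v$ one has $\langle w, MVv\rangle = \langle VMw, v\rangle = \langle Vw, v\rangle = 0$ (using symmetry of $M$, together with $Mw = w$ and $Vw = \mathbf{0}_{n}$). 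Since $\mathcal{M}(MV) \subseteq \mathcal{M}(M)$ always holds and $w \in \mathcal{M}(M)$ is a nonzero vector orthogonal to $\mathcal{M}(MV)$, the inclusion must be proper, which is exactly (\ref{cond:REML_P1}). (The same computation run backwards shows the converse, so (\ref{cond:ML_prob}) and (\ref{cond:REML_P1}) are in fact equivalent, but only the stated implication is needed.)

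For part (b) I would start from the equivalence already recorded in the proof of the preceding proposition, namely that (\ref{cond:REML_P1}) holds if and only if $\mathcal{M}(BV) \subsetneq \mathbb{R}^{n-p}$. The key step is the identity $\mathcal{M}(BV) = \mathcal{M}(BVB')$. One inclusion, $\mathcal{M}(BVB') \subseteq \mathcal{M}(BV)$, is immediate. For the reverse, write $V = V^{1/2}V^{1/2}$ with $V^{1/2}$ symmetric non-negative definite (possible since $V$ is non-negative definite); then $\mathcal{M}(BV) \subseteq \mathcal{M}(BV^{1/2})$, and since $BVB' = (BV^{1/2})(BV^{1/2})'$ while $\mathcal{M}(CC') = \mathcal{M}(C)$ for any real $C$, also $\mathcal{M}(BV^{1/2}) = \mathcal{M}(BVB')$. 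Hence $\mathcal{M}(BV) = \mathcal{M}(BVB')$, and therefore $\mathcal{M}(BV) \subsetneq \mathbb{R}^{n-p}$ if and only if $\rank(BVB') < n-p$, i.e.\ if and only if $0$ is an eigenvalue of $BVB'$. By the spectral decomposition (\ref{eq:BVBprime}), in which $m_{d} = 0$ has multiplicity $\nu_{d}$, this last condition is precisely $\nu_{d} > 0$, which is (\ref{eq:nu}).

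The only place where the non-negative definiteness of $V$ is genuinely used, and hence the main point to get right, is the identity $\mathcal{M}(BV) = \mathcal{M}(BVB')$ in part (b); for a merely symmetric $V$ this can fail, so the square-root factorization (equivalently, the quadratic-form argument passing from $BVB't = \mathbf{0}_{n-p}$ to $VB't = \mathbf{0}_{n}$) is essential. The remaining ingredients are the standard facts $\mathcal{M}(X)^{\perp} = \mathcal{M}(M)$, $\ker V = \mathcal{M}(V)^{\perp}$, and $\mathcal{M}(CC') = \mathcal{M}(C)$, which I would verify but expect to be routine.
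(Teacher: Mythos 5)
Your proof is correct, and in part (a) it takes a genuinely different route from the paper. The paper proves (a) by contradiction: from $\mathcal{M}(MV)=\mathcal{M}(M)$ it asserts that $\mathcal{M}(M)\subset\mathcal{M}(V)$, hence that $\mathcal{M}([X,V])=\mathbb{R}^{n}$. You instead produce an explicit witness: a nonzero $w\perp\mathcal{M}([X,V])$ satisfies $Mw=w$ and $Vw=\mathbf{0}_{n}$, hence lies in $\mathcal{M}(M)$ and is orthogonal to $\mathcal{M}(MV)$, forcing the inclusion $\mathcal{M}(MV)\subseteq\mathcal{M}(M)$ to be proper. This buys you two things: the full equivalence of (\ref{cond:ML_prob}) and (\ref{cond:REML_P1}) rather than only the stated implication, and independence from the paper's intermediate claim, which as a standalone matrix statement is in fact false --- in the case $n=2$, $p=1$ with $X=(1,0)^{\prime}$ and $V=\mathbf{1}_{2}\mathbf{1}_{2}^{\prime}$ one has $\mathcal{M}(MV)=\mathcal{M}(M)$ yet $\mathcal{M}(M)\not\subset\mathcal{M}(V)$; the paper's conclusion survives because of the orthogonal decomposition $\mathcal{M}([X,V])=\mathcal{M}(X)\oplus\mathcal{M}(MV)$, which is essentially what your orthogonality computation establishes. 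In part (b) you follow the paper's skeleton exactly: (\ref{cond:REML_P1}) holds iff $\mathcal{M}(BV)\subsetneq\mathbb{R}^{n-p}$, combined with the identity $\mathcal{M}(BV)=\mathcal{M}(BVB^{\prime})$ and the observation that singularity of $BVB^{\prime}$ is precisely $\nu_{d}>0$ in the decomposition (\ref{eq:BVBprime}). The only difference is that the paper lists the identity $\mathcal{M}(BV)=\mathcal{M}(BVB^{\prime})$ as a known fact, whereas you prove it via the factorization $BVB^{\prime}=(BV^{1/2})(BV^{1/2})^{\prime}$ and $\mathcal{M}(CC^{\prime})=\mathcal{M}(C)$, correctly identifying this as the one step where non-negative definiteness of $V$ (not mere symmetry) is indispensable.
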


\begin{proof}
To prove the part (a) let us assume that 
$\mathcal{M}(MV) = \mathcal{M}(M)$.
This condition implies that
$\mathcal{M}(M) \subset \mathcal{M}(V)$,
which is equivalent 
to $\mathcal{M}([X,V])=\mathbb{R}^{n}$, and so we have obtained a contradiction.

The part (b) is a consequence
of  the following
facts: 
the condition (\ref{cond:REML_P1}) is equivalent to 
$
\mathcal{M}(BV) \subsetneq \mathbb{R}^{n-p}
$
and $\mathcal{M}(BV)=\mathcal{M}(BVB^{\prime})$.
\end{proof}

\section{The ML degree of the model}
\label{sec:ML_Degree}

Let
$\alpha_{1}>\alpha_{2}>\ldots >\alpha_{d_{0}}=0$
denote
the decreasing sequence of distinct eigenvalues of $ V $
and let $s_{i}$, $i=1,\ldots,d_{0}$, stand for their multiplicities.
The ML equation system (\ref{eq:ML})
can be written as
\begin{align} \label{eq:MLEquations}
\begin{split}
  \sum_{i=1}^{d-1}\frac{\nu_{i}m_{i}}{(m_{i}\sigma_{1}^{2}+\sigma_{2}^{2})^{2}}T_{i} &=\sum_{j=1}^{d_{0}-1}\frac{s_{j}\alpha_{j}}{\alpha_{j}\sigma_{1}^{2}+\sigma_{2}^{2}},\\
\sum_{i=1}^{d}\frac{\nu_{i}}{(m_{i}\sigma_{1}^{2}+\sigma_{2}^{2})^{2}}T_{i}  &=\sum_{j=1}^{d_{0}}\frac{s_{j}}{\alpha_{j}\sigma_{1}^{2}+\sigma_{2}^{2}},
\end{split}
\end{align}
compare \cite[p.~286]{GSTU02}.

Let us introduce the variables
\[
\sigma^{2} := \sigma_{1}^{2}+\sigma_{2}^{2}, \quad   \rho: = \frac{\sigma_{1}^{2}}{\sigma_{1}^{2}+\sigma_{2}^{2}}. 
\]
It can be seen that
\begin{equation} \label{eq:inv}
\sigma_{1}^{2}=\sigma^{2}\rho,  \quad \sigma_{2}^{2}=\sigma^{2}(1-\rho).
\end{equation}
Let us define the rational algebraic expressions
\begin{align}
\phi_{\mu}(\rho) := &(\mu-1)\rho+1, \quad H_{1}(\rho):=\sum_{i=1}^{d-1} \frac{\nu_{i}m_{{i}}}{\phi^{2}_{m_{i}}(\rho)}T_{i}, \\
 H_{2}(\rho):= &\sum_{j=1}^{d_{0}-1}\frac{\alpha_{j}s_{j}}{\phi_{\alpha_{j}}(\rho)}, \quad \text{and} \quad h(\rho) := \frac{H_{1}(\rho)}{H_{2}(\rho)}.  \label{eq:h}
\end{align}

If $ T_{i}>0 $ for some $ i<d $, then the system (\ref{eq:MLEquations}),
assuming that $\sigma_{1}^{2} +  \sigma_{2}^{2} \neq 0$,
is equivalent to:
\begin{align}
\sigma^{2}&=h(\rho), \label{eq:sigma}  \\
\sum_{i=1}^{d}\frac{\nu_{i}}{\phi^{2}_{m_{i}}(\rho)}T_{i}&=h(\rho)\sum_{j=1}^{d_{0}}\frac{s_{j}}{\phi_{\alpha_{j}}
(\rho)}, \label{eq:rho}
\end{align}
compare \cite[p.~287]{GSTU02}.
Let us observe that if we find a solution to (\ref{eq:rho}) with respect to $ \rho $, we will be able to compute a solution to the system
(\ref{eq:MLEquations}) using (\ref{eq:sigma}) and (\ref{eq:inv}).
Let us also note that each solution to (\ref{eq:MLEquations}) that does not have the form $ (\theta, -\theta) $, where $ \theta $ is a complex number, can be obtained in this way if $ T_{i}>0 $ for some $ i<d $.
Finding solutions to (\ref{eq:rho}) in turn reduces to 
finding roots of
$P(\rho):=P_{1}(\rho)P_{2}(\rho)-P_{3}(\rho)P_{4}(\rho)$ 
with the polynomials $ P_{1}(\rho)  $, $ P_{2}(\rho)  $, $ P_{3}(\rho)  $, and $ P_{4}(\rho)  $ obtained by simplifying
\begin{align*}
R_{1}(\rho)&:=\sum_{i=1}^{d}\frac{\nu_{i}}{\phi^{2}_{m_{i}}(\rho)}T_{i} Q_{1}(\rho) (1-\rho)^{2},  & R_{2}(\rho)&:=\sum_{j=1}^{d_{0}-1}\frac{\alpha_{j}s_{j}}{\phi_{\alpha_{j}}(\rho)} Q_{2}(\rho), \\
R_{3}(\rho)&:=\sum_{i=1}^{d-1} \frac{\nu_{i}m_{{i}}}{\phi^{2}_{m_{i}}(\rho)} Q_{1}(\rho)  T_{i}, \quad \text{and} &
R_{4}(\rho)&:=\sum_{j=1}^{d_{0}}\frac{s_{j}}{\phi_{\alpha_{j}}(\rho)}Q_{2}(\rho)(1-\rho)^{2},
\end{align*}
respectively, where 
\[ 
Q_{1}(\rho) :=  \prod_{i=1}^{d-1} {\phi^{2}_{m_{i}}(\rho)}
\quad \text{and} \quad
Q_{2}(\rho) :=
\prod_{j=1}^{d_{0}-1}{\phi_{\alpha_{j}}(\rho)}.
\]
It can be seen that if $ \phi_{\tau}(\rho) \neq 0 $, where $ \tau \in \{m_{1},m_{2},\ldots,m_{d} \} \cup 
 \{s_{1},s_{2},\ldots,s_{d_{0}} \}  $, and $ P(\rho)=0 $, then $ \rho $
 is a solution to the equation (\ref{eq:rho}). Let us note that all solutions to (\ref{eq:rho}) can be obtained in this way.

It can be shown that
\begin{thm} \label{thm:degreeP}
\begin{enumerate}[(a)]
\item 
The degree of $ P(\rho) $ is bounded from above by $2d+d_{0}-4$.
\item 
If the condition (\ref{cond:ML_prob}) holds, then $ P(\rho) $ is a non-zero polynomial with probability $ 1 $.
\end{enumerate}
\end{thm}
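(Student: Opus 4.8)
The plan is to treat the two parts by different means: part (a) reduces to a bookkeeping of degrees, while part (b) is a measure-theoretic genericity argument. For part (a) I would first check that each $R_i$, once its removable singularities are cleared, really is a polynomial $P_i$, and then bound its degree. The only delicate point is the behaviour of the summands attached to the zero eigenvalues $m_d=0$ and $\alpha_{d_0}=0$, for which $\phi_{m_d}(\rho)=\phi_{\alpha_{d_0}}(\rho)=1-\rho$, together with the fact that $Q_1$ and $Q_2$ omit these last indices. In $R_1=\big(\sum_{i=1}^{d}\nu_i T_i/\phi_{m_i}^2(\rho)\big)Q_1(\rho)(1-\rho)^2$, the factor $(1-\rho)^2$ cancels the denominator $\phi_{m_d}^2(\rho)=(1-\rho)^2$ of the $i=d$ term, leaving $\nu_d T_d Q_1(\rho)$ of degree $2(d-1)$; for $i<d$ the factor $\phi_{m_i}^2(\rho)$ cancels against the matching factor of $Q_1$, and the surviving summand again has degree $2(d-1)$, so $\deg P_1\le 2d-2$. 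The same reasoning, with the $(1-\rho)^2$ in $R_4$ now cancelling the $j=d_0$ term, gives $\deg P_2\le d_0-2$, $\deg P_3\le 2d-4$ and $\deg P_4\le d_0$. Combining, $\deg(P_1P_2)\le 2d+d_0-4$ and $\deg(P_3P_4)\le 2d+d_0-4$, whence $\deg P\le 2d+d_0-4$, as claimed.

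For part (b) the key structural observation is that $P_2$ and $P_4$ do not involve the data, whereas $P_1$ and $P_3$ are linear and homogeneous in $(T_1,\dots,T_d)$; hence every coefficient of $P(\rho)$, viewed as a polynomial in $\rho$, is a linear form in $(T_1,\dots,T_d)$, and it suffices to exhibit one coefficient that is a non-trivial such form. I would evaluate at $\rho=0$: since $\phi_\mu(0)=1$ and $Q_1(0)=Q_2(0)=1$,
\[
P(0)=\Big(\sum_{i=1}^{d}\nu_i T_i\Big)\Big(\sum_{j=1}^{d_0-1}\alpha_j s_j\Big)-\Big(\sum_{i=1}^{d-1}\nu_i m_i T_i\Big)\Big(\sum_{j=1}^{d_0}s_j\Big).
\]
Because $T_d$ occurs only in the first product, its coefficient in $P(0)$ equals $\nu_d\sum_{j=1}^{d_0-1}\alpha_j s_j$, which is strictly positive: $\nu_d>0$ by (\ref{eq:nu}), guaranteed here since (\ref{cond:ML_prob}) implies (\ref{cond:REML_P1}) and the latter is equivalent to (\ref{eq:nu}); and $\sum_{j=1}^{d_0-1}\alpha_j s_j>0$ because the non-zero matrix $V$ of rank $k<n$ has at least one positive and one zero eigenvalue, so that $d_0\ge 2$. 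Therefore $\{P(0)=0\}$ is a proper linear subspace of $\mathbb{R}^d$.

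Finally I would invoke absolute continuity: since $\nu_i\ge 1$ for every $i$ (in particular $\nu_d\ge 1$), the statistics $T_1,\dots,T_d$ are independent with scaled central chi-squared laws, so their joint distribution is absolutely continuous with respect to Lebesgue measure on $\mathbb{R}^d$ and assigns probability zero to any proper affine subspace. As $\{P\equiv 0\}\subseteq\{P(0)=0\}$, this yields $P(P\equiv 0)\le P(P(0)=0)=0$, i.e.\ $P$ is a non-zero polynomial with probability one. The main obstacle is the correct cancellation of the zero-eigenvalue terms in part (a), and in part (b) the recognition that $T_d$ survives in $P_1$ but is absent from $P_3$; this is exactly what makes the chosen coefficient a non-trivial linear form, and it is the point at which the hypothesis $\nu_d>0$ enters decisively.
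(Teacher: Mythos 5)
Your proposal is correct, but it does not follow the paper's route: the paper's entire proof of Theorem \ref{thm:degreeP} is a one-line deferral to Theorem 2.5 of \cite{Grz14}, so what you have written is a self-contained alternative rather than a reconstruction of the printed argument. Your part (a) is exactly the degree bookkeeping such a direct verification requires: the factor $(1-\rho)^2$ in $R_1$ and $R_4$ cancels the denominators $\phi_{m_d}^2(\rho)=(1-\rho)^2$ and $\phi_{\alpha_{d_0}}(\rho)=1-\rho$ attached to the zero eigenvalues, the remaining denominators cancel against matching factors of $Q_1$ and $Q_2$, and the bounds $\deg P_1\le 2d-2$, $\deg P_2\le d_0-2$, $\deg P_3\le 2d-4$, $\deg P_4\le d_0$ combine to $\deg P\le 2d+d_0-4$ (when some $m_i=1$ or $\alpha_j=1$ the corresponding $\phi$ is constant, which only lowers degrees, so the upper bound is unaffected). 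Your part (b) is a clean genericity certificate that the paper leaves hidden in the citation: every coefficient of $P$ is a linear form in $(T_1,\dots,T_d)$, the constant coefficient $P(0)$ carries $T_d$ with weight $\nu_d\sum_{j=1}^{d_0-1}\alpha_j s_j$, and this weight is strictly positive because (\ref{cond:ML_prob}) forces $\nu_d>0$ (via the paper's proposition that (\ref{cond:ML_prob}) implies (\ref{cond:REML_P1}), which in turn is equivalent to (\ref{eq:nu})), while $V\neq 0$ together with $\rank(V)<n$ guarantees $d_0\ge 2$; absolute continuity of the joint law of the independent scaled chi-squared variables $T_1,\dots,T_d$ then gives $\Pr(P\equiv 0)\le \Pr(P(0)=0)=0$. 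What your route buys is transparency — it exhibits precisely where the hypotheses $\nu_d>0$ and the rank structure of $V$ enter, and it makes the paper's theorem verifiable without consulting \cite{Grz14}; what the paper's citation buys is brevity and consistency with the earlier work, where these cancellation and genericity arguments are already packaged.
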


\begin{proof}
The proof easily follows from Theorem 2.5  in \cite{Grz14}.
\end{proof}

The above theorem  suggests that the ML degree of the linear mixed model with two variance components does not exceed $ 2d+d_{0}-4 $ if the appropriate assumptions are satisfied. To prove this, we need the following

\begin{lem} \label{lem:1}
The probability that the ML equations (\ref{eq:MLEquations}) have a solution of the form $s=(\theta,-\theta)$,
where $\theta$ belongs to the set of complex numbers, is equal to $0$.
\end{lem}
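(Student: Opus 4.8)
The plan is to substitute a hypothetical solution $s=(\theta,-\theta)$ directly into the system (\ref{eq:MLEquations}), eliminate the free parameter $\theta$, and show that what survives is a nontrivial equation in the data $T_{1},\dots,T_{d}$. Since, by the remarks following (\ref{eq:nu}), these are mutually independent with absolutely continuous (scaled chi-squared) distributions, any such equation is satisfied only on a Lebesgue-null set, hence with probability $0$.

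First I would put $\sigma_{1}^{2}=\theta$, $\sigma_{2}^{2}=-\theta$, so that $m_{i}\sigma_{1}^{2}+\sigma_{2}^{2}=\theta(m_{i}-1)$ and $\alpha_{j}\sigma_{1}^{2}+\sigma_{2}^{2}=\theta(\alpha_{j}-1)$. For such a point to be a genuine solution of the rational system (\ref{eq:MLEquations}) one needs $\theta\neq0$ and all the denominators nonzero; in particular, should some interior eigenvalue equal $1$ the corresponding term is undefined, and a solution would force the associated $T_{i}$ to vanish, an event of probability $0$, so I may assume $m_{i}\neq1$ for $i<d$ and $\alpha_{j}\neq1$ for $j<d_{0}$. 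Multiplying both equations by $\theta^{2}$ then turns (\ref{eq:MLEquations}) into
\begin{equation*}
A_{1}=\theta B_{1},\qquad A_{2}=\theta B_{2},
\end{equation*}
where $A_{1}:=\sum_{i=1}^{d-1}\tfrac{\nu_{i}m_{i}}{(m_{i}-1)^{2}}T_{i}$ and $A_{2}:=\sum_{i=1}^{d}\tfrac{\nu_{i}}{(m_{i}-1)^{2}}T_{i}$ are linear forms in the data, while $B_{1}:=\sum_{j=1}^{d_{0}-1}\tfrac{s_{j}\alpha_{j}}{\alpha_{j}-1}$ and $B_{2}:=\sum_{j=1}^{d_{0}}\tfrac{s_{j}}{\alpha_{j}-1}$ are constants determined by the model alone.

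The crucial structural point is that $A_{1}$ involves only $T_{1},\dots,T_{d-1}$, whereas $A_{2}$ in addition contains the term $\nu_{d}T_{d}$ (because $m_{d}=0$ makes $(m_{d}-1)^{2}=1$), and $\nu_{d}>0$ by the standing assumption (\ref{eq:nu}). To eliminate $\theta$ I would condition on $T_{1},\dots,T_{d-1}$. When $B_{1}\neq0$ the first equation fixes $\theta=A_{1}/B_{1}$, and the second then forces
\begin{equation*}
T_{d}=\frac{A_{1}B_{2}-B_{1}\sum_{i=1}^{d-1}\frac{\nu_{i}}{(m_{i}-1)^{2}}T_{i}}{\nu_{d}B_{1}},
\end{equation*}
a single value that is a measurable function of $T_{1},\dots,T_{d-1}$. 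As $T_{d}$ is independent of these and absolutely continuous, the probability that it hits this prescribed value is $0$; integrating out $T_{1},\dots,T_{d-1}$ gives the claim. The remaining configurations ($B_{1}=0$, or both $B_{1},B_{2}$ vanishing) collapse to even more restrictive linear conditions, namely $A_{1}=0$ or $A_{2}=0$, each of which is a nontrivial hyperplane because its coefficients $\tfrac{\nu_{i}m_{i}}{(m_{i}-1)^{2}}$, respectively the $T_{d}$-coefficient $\nu_{d}$, are nonzero; these too are null events.

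The main obstacle is precisely to certify that the equation left after eliminating $\theta$ is not an identity in the data, i.e.\ that the resultant $A_{1}B_{2}-A_{2}B_{1}$ is a genuinely nonzero function of $T_{1},\dots,T_{d}$. This is exactly where (\ref{eq:nu}) enters: it is the appearance of $T_{d}$, with nonzero coefficient $\nu_{d}$, in the second equation but not in the first that keeps the solution locus inside a proper, hence null, affine subset. The only situation demanding separate inspection is the degenerate case $BVB^{\prime}=0$ (so $d=1$ and $A_{1}\equiv0$); in the genuinely two-component residual setting $d\geq2$ one always has a nontrivial $A_{1}$, and the argument above goes through without change.
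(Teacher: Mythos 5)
Your proof is correct and follows essentially the same route as the paper's: substitute $(\theta,-\theta)$, split on whether the model constant $B_{1}=\sum_{j}s_{j}\alpha_{j}/(\alpha_{j}-1)$ vanishes, solve the first equation for $\theta$, and exploit the fact that $T_{d}$ enters only the second equation with positive coefficient $\nu_{d}$ while being independent of $T_{1},\dots,T_{d-1}$ and absolutely continuous. The differences are cosmetic: the paper phrases the last step as a sum of independent random variables (one having a density) admitting no atom at $0$ rather than as a conditioning argument, and in the case $B_{1}=0$ it uses non-negativity of the terms to force $T_{i}=0$ for $i<d$; your explicit treatment of the $m_{i}=1$, $\alpha_{j}=1$ and $d=1$ degeneracies is added care that the paper's proof silently passes over.
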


\begin{proof}
Upon substitution 
$ \theta $ for $\sigma_{1}^{2}$
and $- \theta $ for $\sigma_{2}^{2}$ the system
(\ref{eq:MLEquations}) reduces to the following set of equations:
\begin{align} \label{eq:MLEquations11}
	\begin{split}
		\sum_{i=1}^{d-1}\frac{\nu_{i}m_{i}}{(m_{i}-1)^{2}}T_{i} &=\theta \sum_{j=1}^{d_{0}-1}\frac{s_{j}\alpha_{j}}{\alpha_{j}-1},\\
		\sum_{i=1}^{d}\frac{\nu_{i}}{(m_{i}-1)^{2}}T_{i}&=\theta \sum_{j=1}^{d_{0}}\frac{s_{j}}{\alpha_{j}-1},\\
		\theta& \neq 0.
	\end{split}
\end{align}
Let us observe that if
\begin{equation} \label{expr:1}
\sum_{j=1}^{d_{0}-1}\frac{s_{j}\alpha_{j}}{\alpha_{j}-1}=0,
\end{equation}
then the left-hand side of the first equation in the system (\ref{eq:MLEquations11}) is equal to $ 0 $. This implies that 
$T_{i}=0$, $i=1,2,\ldots,d-1$.
The probability of this event is equal to $ 0 $.

If the condition (\ref{expr:1}) does not hold, then 
\[
 \theta=\frac{\sum_{i=1}^{d-1}\frac{\nu_{i}m_{i}}{(m_{i}-1)^{2}}T_{i}}{\sum_{j=1}^{d_{0}-1}\frac{s_{j}\alpha_{j}}{\alpha_{j}-1}}
\]
can be regarded as a function of $ T_{1},\ldots,T_{d-1} $ and  
the second equation of the system (\ref{eq:MLEquations11}) can be rewritten in the form
\begin{equation} \label{eq:2-10new}
\sum_{i=1}^{d}\frac{\nu_{i}}{(m_{i}-1)^{2}} T_{i}  -  \theta 
\sum_{j=1}^{d_{0}}\frac{s_{j}}{\alpha_{j}-1 }=0.
\end{equation}

The left-hand side of the above equation can be expressed   
as the sum of the independent random variables $X_{1}$ and 
$X_{2}$ given by
\[
X_{1}=\frac{\nu_{d}}{(m_{d}-1)^{2}}T_{d}, \quad 
X_{2}=\sum_{i=1}^{d-1}\frac{\nu_{i}}{(m_{i}-1)^{2}} T_{i}  -  \theta 
\sum_{j=1}^{d_{0}}\frac{s_{j}}{\alpha_{j}-1 }.
\]
Since $X_{1}$ is absolutely continuous, 
the sum $X_{1}+X_{2}$  has a density (so it cannot have an atom in $0$) and this completes the proof.
\end{proof}

An immediate consequence of
Lemma \ref{lem:1}, Theorem \ref{thm:degreeP}, and the fact that $ T_{i}>0 $ outside a null set $(i=1,\ldots,d) $
is
\begin{thm} \label{thm:MLdegree}
If the model (\ref{model}) satisfies the condition (\ref{cond:ML_prob}), then its ML degree is 
bounded from above by
$ 2d+d_{0}-4$.
\end{thm}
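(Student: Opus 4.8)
The plan is to translate the definition of the ML degree into a count of the complex solutions of the system (\ref{eq:MLEquations}) for generic data, and then to bound that count by the number of roots of the polynomial $P(\rho)$. Since the ML degree is the number of complex solutions of (\ref{eq:ML}), equivalently of (\ref{eq:MLEquations}), for data lying outside some null set, I would first fix a single probability-$1$ event on which all three auxiliary facts hold simultaneously: (i) $T_i>0$ for every $i=1,\ldots,d$ (so in particular $T_i>0$ for some $i<d$), which holds outside a null set since each $T_i$ is a positive multiple of a chi-squared variable; (ii) the system (\ref{eq:MLEquations}) has no solution of the form $(\theta,-\theta)$, which holds with probability $1$ by Lemma \ref{lem:1}; and (iii) $P(\rho)$ is a non-zero polynomial, which holds with probability $1$ under (\ref{cond:ML_prob}) by Theorem \ref{thm:degreeP}(b). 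The intersection of three probability-$1$ events is again a probability-$1$ event, so it suffices to prove the solution-count bound on this event.

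Next I would set up the bookkeeping between solutions and roots. On the event above, any complex solution of (\ref{eq:MLEquations}) with $\sigma_1^2+\sigma_2^2\neq 0$ corresponds, through the change of variables (\ref{eq:inv}), to a pair $(\sigma^2,\rho)$ satisfying (\ref{eq:sigma})--(\ref{eq:rho}); this correspondence is injective because (\ref{eq:inv}) inverts the substitution $\sigma^2=\sigma_1^2+\sigma_2^2$, $\rho=\sigma_1^2/(\sigma_1^2+\sigma_2^2)$ whenever $\sigma^2\neq 0$. Crucially, once a value $\rho$ solving (\ref{eq:rho}) is chosen, equation (\ref{eq:sigma}) forces $\sigma^2=h(\rho)$, so each admissible $\rho$ yields at most one pair $(\sigma^2,\rho)$ and hence at most one solution $(\sigma_1^2,\sigma_2^2)$. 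Thus the number of such solutions is at most the number of $\rho$ solving (\ref{eq:rho}). Since every solution of (\ref{eq:rho}) necessarily has $\phi_\tau(\rho)\neq 0$ for the relevant $\tau$ (these factors appear in the denominators), each such $\rho$ is a root of $P(\rho)$; as $P(\rho)$ is non-zero by (iii) and of degree at most $2d+d_0-4$ by Theorem \ref{thm:degreeP}(a), it has at most $2d+d_0-4$ roots, bounding the number of solutions with $\sigma_1^2+\sigma_2^2\neq 0$ by $2d+d_0-4$.

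Finally I would dispose of the remaining solutions, those with $\sigma_1^2+\sigma_2^2=0$. These are precisely the solutions of the form $(\theta,-\theta)$, which by (ii) do not occur on our probability-$1$ event. Hence on this event the total number of complex solutions of (\ref{eq:MLEquations}) is at most $2d+d_0-4$, and the same bound holds for the ML degree. The main obstacle is not any single hard computation but the careful bookkeeping in the second step: verifying that the passage between $(\sigma_1^2,\sigma_2^2)$-solutions and roots of $P$ is genuinely one-to-one (so that no solution is double-counted and none is missed), and confirming that the solutions lost when clearing the denominators of (\ref{eq:rho}) to form $P(\rho)$ are exactly the degenerate ones already excluded by Lemma \ref{lem:1}. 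Once this correspondence is pinned down, the degree bound of Theorem \ref{thm:degreeP}(a) closes the argument.
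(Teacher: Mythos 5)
Your proposal is correct and follows exactly the paper's route: the paper proves this theorem as an ``immediate consequence'' of Lemma \ref{lem:1}, Theorem \ref{thm:degreeP}, and the almost-sure positivity of the $T_i$'s, which are precisely the three ingredients (ii), (iii), and (i) of your probability-one event. Your second and third paragraphs simply make explicit the bookkeeping (the bijection-up-to-degeneracy between solutions of (\ref{eq:MLEquations}) and roots of $P(\rho)$, and the exclusion of solutions of the form $(\theta,-\theta)$) that the paper leaves implicit.
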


\section{The REML degree of the model}
\label{sec:REML_Degree}

The REML equation system,
that is the ML equation system in the model 
$\mathcal{N}\{z,\mathbf{0}_{n-p},
\sigma_{1}^{2} BVB^{\prime}+ \sigma_{2}^{2}I_{n-p}\}$ with $z=BY$,
where $B$ is a matrix satisfying the condition (\ref{eq:B}),
can be presented in the form
\begin{align} \label{eq:REMLquations}
\begin{split}
  \sum_{i=1}^{d-1}\frac{\nu_{i}m_{i}}{(m_{i}\sigma_{1}^{2}+\sigma_{2}^{2})^{2}}T_{i} &=\sum_{j=1}^{d-1}\frac{\nu_{j}m_{j}}{m_{j}\sigma_{1}^{2}+\sigma_{2}^{2}},\\
\sum_{i=1}^{d}\frac{\nu_{i}}{(m_{i}\sigma_{1}^{2}+\sigma_{2}^{2})^{2}}T_{i}  &=\sum_{j=1}^{d}\frac{\nu_{j}}{m_{j}\sigma_{1}^{2}+\sigma_{2}^{2}},
\end{split}
\end{align}
see \cite[p.~291]{ GSTU02}. 
If $ T_{i}>0 $ for some $ i<d $, then
finding all the solutions to the system (\ref{eq:REMLquations}) that don't have the form $ s=(\theta,-\theta)^{\prime} $, where $ \theta $ is a complex number, can be reduced to finding all roots of the 
polynomial $P^{*}(\rho)$ obtained by simplifying
the algebraic expression $P_{0}^{*}(\rho):=(R_{1}^{*}(\rho)-R_{2}^{*}(\rho)) Q_{1}^{*}(\rho)$, where 
\begin{align}
R_{1}^{*}(\rho):=&\sum_{i=1}^{d}\frac{\nu_{i}}{\phi^{2}_{m_{i}}(\rho)}T_{i} \sum_{j=1}^{d-1}\frac{m_{j}\nu_{j}}{\phi_{m_{j}}(\rho)},   \\
R_{2}^{*}(\rho):=&\sum_{i=1}^{d-1} \frac{\nu_{i}m_{{i}}}{\phi^{2}_{m_{i}}(\rho)}T_{i}\sum_{j=1}^{d}\frac{\nu_{j}}{\phi_{m_{j}}
(\rho)} \quad
\text{and} \quad Q_{1}^{*}(\rho) :=  \prod_{i=1}^{d} {\phi^{2}_{m_{i}}(\rho)}         
\end{align}   
in a similar way as in the case of the ML equations.



\begin{thm} 

\begin{enumerate}[(a)]
\item The degree of the  polynomial $ P^{*}(\rho)$ is bounded from above by $2d-3 $.
\item If the condition (\ref{cond:REML_P1}) holds, then  $ P^{*}(\rho)$ is a non-zero polynomial with probability $ 1 $.
\end{enumerate}
\end{thm}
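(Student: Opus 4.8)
The plan is to follow the pattern already used for the ML equations, but to take advantage of an extra cancellation peculiar to the REML system (\ref{eq:REMLquations}), whose two sides are built from the \emph{same} eigenstructure $m_i,\nu_i$. For part~(a) I would first collapse $R_1^*(\rho)-R_2^*(\rho)$ into a single double sum. Writing both $R_1^*$ and $R_2^*$ as sums over pairs $(i,j)$ and relabelling the dummy indices of $R_2^*$ (interchanging the roles of $i$ and $j$) gives
\[
R_1^*(\rho)-R_2^*(\rho)=\sum_{i=1}^{d}\sum_{j=1}^{d-1}\nu_i\nu_j m_j\,\frac{T_i\phi_{m_j}(\rho)-T_j\phi_{m_i}(\rho)}{\phi^2_{m_i}(\rho)\,\phi^2_{m_j}(\rho)}.
\]
The crucial point is that every diagonal term $i=j$ now has numerator $T_i\phi_{m_i}-T_i\phi_{m_i}=0$ and so disappears; only the off-diagonal terms with $i\neq j$ survive, and for each of these the denominator $\phi^2_{m_i}\phi^2_{m_j}$ divides $Q_1^*(\rho)=\prod_{\ell=1}^{d}\phi^2_{m_\ell}(\rho)$. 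Consequently
\[
P_0^*(\rho)=\bigl(R_1^*(\rho)-R_2^*(\rho)\bigr)Q_1^*(\rho)=\sum_{i\neq j}\nu_i\nu_j m_j\bigl(T_i\phi_{m_j}(\rho)-T_j\phi_{m_i}(\rho)\bigr)\prod_{\ell\neq i,j}\phi^2_{m_\ell}(\rho)
\]
is a genuine polynomial, and since each $\phi_{m_\ell}$ has degree at most $1$, every summand has degree at most $1+2(d-2)=2d-3$. Hence $\deg P^*(\rho)\le\deg P_0^*(\rho)\le 2d-3$, which is part~(a).

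For part~(b) I would exploit that the expression just obtained is homogeneous and linear in $(T_1,\dots,T_d)$, so that $P^*(\rho)=\sum_{i=1}^{d}T_i\,g_i(\rho)$ for fixed polynomials $g_i$ depending only on the $m_\ell,\nu_\ell$. It then suffices to exhibit one index whose coefficient is not the zero polynomial, and $i=d$ is the convenient choice: in the display above $T_d$ occurs only through the pairs $(d,j)$ with $j<d$, whence
\[
g_d(\rho)=\nu_d\sum_{j=1}^{d-1}\nu_j m_j\,\phi_{m_j}(\rho)\prod_{\ell\neq d,j}\phi^2_{m_\ell}(\rho).
\]
Under condition (\ref{cond:REML_P1}), which is equivalent to (\ref{eq:nu}), i.e.\ $\nu_d>0$, evaluation at $\rho=0$ (where each $\phi_{m_\ell}(0)=1$) gives $g_d(0)=\nu_d\sum_{j=1}^{d-1}\nu_j m_j>0$, so $g_d\not\equiv 0$. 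Fixing any $\rho_0$ with $g_d(\rho_0)\neq 0$, the event $\{P^*\equiv 0\}$ forces $T_d=-\sum_{i<d}T_i g_i(\rho_0)/g_d(\rho_0)$, a single value once $T_1,\dots,T_{d-1}$ are prescribed; since $T_d$ is absolutely continuous and independent of $T_1,\dots,T_{d-1}$ (precisely as in the proof of Lemma~\ref{lem:1}), this event has probability $0$, giving part~(b).

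The step I expect to be most delicate is the bookkeeping in part~(a). Considered in isolation neither $R_1^*Q_1^*$ nor $R_2^*Q_1^*$ is a polynomial: on the diagonal the denominator is $\phi^3_{m_i}(\rho)$ while $Q_1^*$ supplies only $\phi^2_{m_i}(\rho)$, leaving an uncancelled factor $\phi^{-1}_{m_i}(\rho)$. It is exactly the passage to the difference $R_1^*-R_2^*$, with its diagonal cancellation, that simultaneously restores polynomiality and holds the degree down to $2d-3$; verifying that $Q_1^*=\prod_{i=1}^{d}\phi^2_{m_i}$ — including the factor contributed by the zero eigenvalue $m_d=0$ — clears all the surviving denominators is the one place where care is needed, the remainder being routine degree counting and the same absolute-continuity argument already used for the ML degree.
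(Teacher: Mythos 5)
Your proof is correct, but it takes a genuinely different route from the paper: the paper disposes of this theorem in a single line, citing Theorem 3.1 of \cite{Grz14}, where the analysis of the REML polynomial is carried out, whereas you prove it from scratch. Your key steps all check out: symmetrizing $R_1^*-R_2^*$ into a double sum over pairs $(i,j)$ makes the diagonal terms cancel, and this cancellation is exactly what clears the obstruction to polynomiality (for $i=j$ the denominator $\phi^3_{m_i}$ is not absorbed by $Q_1^*$, so neither $R_1^*Q_1^*$ nor $R_2^*Q_1^*$ is a polynomial on its own); the surviving off-diagonal terms each have degree at most $1+2(d-2)=2d-3$, giving part (a). For part (b), writing $P^*=\sum_i T_i g_i(\rho)$ and noting $g_d(0)=\nu_d\sum_{j<d}\nu_j m_j>0$, then conditioning on $T_1,\dots,T_{d-1}$ and using the independence and absolute continuity of $T_d$, is the same device as the paper's own Lemma \ref{lem:1}, applied here correctly. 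What your approach buys is transparency and self-containment: the reader sees precisely where the bound $2d-3$ comes from and why the hypothesis (\ref{cond:REML_P1}), equivalently $\nu_d>0$, is what makes the coefficient of $T_d$ nontrivial; what the paper's citation buys is brevity. Two points you should make explicit rather than implicit: the positivity $g_d(0)>0$ needs $d\ge 2$ (i.e.\ $BVB'\neq \mathbf{0}$), which is implicit in the paper's standing spectral decomposition (\ref{eq:BVBprime}) together with assumption (\ref{eq:nu}); and the equivalence of (\ref{cond:REML_P1}) with $\nu_d>0$ is part (b) of the second proposition of Section \ref{section:ML_REML}, which you should invoke rather than assert.
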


\begin{proof}
The theorem easily follows from Theorem 3.1 in \cite{Grz14}.
\end{proof}

The REML degree of the linear mixed model
$\mathcal{N}(Y, X \beta, \Sigma(s))$
can be defined as the ML degree of the model $\mathcal{N}(BY, \mathbf{0}_{n-p} , B\Sigma(s))B^{\prime})$, where $ B $ is a matrix satisfying (\ref{eq:B}).

\begin{thm} \label{thm:REMLDegree}
If the model (\ref{model}) satisfies the condition (\ref{cond:REML_P1}), then its REML degree does not exceed $ 2d-3 $.
\end{thm}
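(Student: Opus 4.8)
The plan is to mirror the proof of Theorem~\ref{thm:MLdegree}, transposing each ingredient from the ML to the REML setting. By definition the REML degree is the ML degree of the modified model $\mathcal{N}(BY,\mathbf{0}_{n-p},B\Sigma(s)B^{\prime})$, that is, the number of complex solutions of the REML equations~(\ref{eq:REMLquations}) when the data are generic, so I would count those solutions and bound the count by $2d-3$. It is worth noting at the outset that one cannot simply invoke Theorem~\ref{thm:MLdegree} applied to the modified model: there the matrix $BVB^{\prime}$ has $d$ distinct eigenvalues and the fixed-effects part is absent, so that route would at best yield $2d+d-4=3d-4$, which is weaker than $2d-3$. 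The improvement stems from the special structure of~(\ref{eq:REMLquations}), in which the eigenvalues $m_{j}$ on the right-hand side coincide with those on the left; this cancellation is exactly what is captured by $P^{*}(\rho)$ and its degree bound, so the argument must be routed through $P^{*}(\rho)$ and the preceding theorem rather than through Theorem~\ref{thm:MLdegree}.

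Next I would split the solutions of~(\ref{eq:REMLquations}) into those of the form $(\theta,-\theta)$ and the rest. On the event that $T_{i}>0$ for some $i<d$---which, since each $T_{i}$ is proportional to a chi-squared variable, holds outside a null set---the reduction already carried out in the text sends every solution \emph{not} of that form to a root of $P^{*}(\rho)$, via the change of variables $\sigma^{2}=\sigma_{1}^{2}+\sigma_{2}^{2}$, $\rho=\sigma_{1}^{2}/\sigma^{2}$ together with the REML analogue of the relation $\sigma^{2}=h(\rho)$ from~(\ref{eq:h}) and~(\ref{eq:sigma}). This map is injective on solutions with $\sigma^{2}\neq 0$, because $\sigma^{2}$ is recovered from $\rho$ and then $\sigma_{1}^{2},\sigma_{2}^{2}$ from~(\ref{eq:inv}); hence the number of such solutions is at most the number of roots of $P^{*}(\rho)$. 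By the preceding theorem, under condition~(\ref{cond:REML_P1}) the polynomial $P^{*}(\rho)$ is nonzero with probability $1$ and has degree at most $2d-3$, so it has at most $2d-3$ roots, and these solutions accordingly number at most $2d-3$.

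It remains to dispose of the solutions of the form $(\theta,-\theta)$, where the only genuine work lies. I would establish the REML counterpart of Lemma~\ref{lem:1}: substituting $\sigma_{1}^{2}=\theta$ and $\sigma_{2}^{2}=-\theta$ into~(\ref{eq:REMLquations}) gives $m_{i}\sigma_{1}^{2}+\sigma_{2}^{2}=\theta(m_{i}-1)$, and after clearing the common factor $\theta^{2}$ one obtains a system formally identical to~(\ref{eq:MLEquations11}) with $\alpha_{j}$ replaced by $m_{j}$ and $s_{j}$ by $\nu_{j}$ (and $d_{0}$ by $d$). The argument of Lemma~\ref{lem:1} then transfers verbatim: if $\sum_{j=1}^{d-1}\nu_{j}m_{j}/(m_{j}-1)=0$ the first equation forces $T_{1}=\cdots=T_{d-1}=0$, a null event; otherwise $\theta$ is determined as a function of $T_{1},\ldots,T_{d-1}$, and the second equation takes the form $X_{1}+X_{2}=0$, where, using $m_{d}=0$, the term $X_{1}=\nu_{d}T_{d}$ is absolutely continuous (as $\nu_{d}>0$) and independent of $X_{2}$, which depends only on $T_{1},\ldots,T_{d-1}$. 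Hence $X_{1}+X_{2}$ has a density and cannot put mass at $0$, so solutions of the form $(\theta,-\theta)$ occur with probability $0$.

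Combining the three steps, for generic data the number of complex solutions of~(\ref{eq:REMLquations}) is bounded by the number of roots of $P^{*}(\rho)$, namely at most $2d-3$, and the REML degree therefore does not exceed $2d-3$. The proof is essentially a mechanical translation of the ML argument; the one point that genuinely requires checking is that the absolute-continuity argument of Lemma~\ref{lem:1} survives the substitution $s_{j}\mapsto\nu_{j}$, $\alpha_{j}\mapsto m_{j}$, and it does precisely because the assumption $\nu_{d}>0$ keeps the $T_{d}$-term absolutely continuous.
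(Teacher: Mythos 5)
Your proof is correct and is essentially the argument the paper has in mind: the paper omits the proof of Theorem~\ref{thm:REMLDegree}, stating only that it is similar to that of Theorem~\ref{thm:MLdegree}, and your write-up supplies exactly that translation --- combining the theorem on $P^{*}(\rho)$ (degree at most $2d-3$, nonzero with probability $1$ under (\ref{cond:REML_P1})), the almost-sure positivity of the $T_{i}$'s, and an REML analogue of Lemma~\ref{lem:1} obtained by the substitution $\alpha_{j}\mapsto m_{j}$, $s_{j}\mapsto\nu_{j}$, $d_{0}\mapsto d$, where $\nu_{d}>0$ and $m_{d}=0$ keep the absolute-continuity step intact. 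Your remark that directly applying Theorem~\ref{thm:MLdegree} to the transformed model would only give $3d-4$ is a correct and worthwhile observation, but it does not change the fact that the route you take coincides with the paper's intended one.
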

We omit the proof of Theorem \ref{thm:REMLDegree} since it is similar to that of
Theorem~\ref{thm:MLdegree}.

\section{General mean structure 
in the one-way layout}
\label{section:GeneralMean}

Let us consider the 
following extension of the
one-way random effects model: 
\begin{equation} \label{eq:OneWay}
	Y=W\beta+Z\alpha+\epsilon,
\end{equation}
where $\beta \in \mathbb{R}^p$ is a fixed mean parameter vector,
$\alpha=(\alpha_{1},\ldots,\alpha_{q}        )^{\prime}$
with $q \ge 2$
is the vector of random effects,
$ W $ is an $ n \times p $ matrix of rank $ p<n $ such that 
$ \mathbf{1}_{n} \in \mathcal{M}(W) $
and  
\begin{equation} \label{eq:Z}
Z= \left[\begin{array}{cccc}
              \mathbf{1}_{n_{1}} & \mathbf{0}_{n_{1}}   & \cdots   &  \mathbf{0}_{n_{1}} \\
             \mathbf{0}_{n_{2}}  &   \mathbf{1}_{n_{2}} &  \cdots  & \mathbf{0}_{n_{2}}  \\
                  \vdots   & \vdots &  \ddots        & \vdots \\
               \mathbf{0}_{n_{q}}  & \mathbf{0}_{n_{q}} & \cdots &  \mathbf{1}_{n_{q}}
         \end{array}\right], 
\end{equation}
where $n=\sum_{k=1}^{q}n_{k}$. We assume that $\alpha \sim N(\mathbf{0}_{q},\mathcal\sigma_{1}^{2}I_{q})$ and
$\epsilon \sim N(\mathbf{0}_n,\sigma_{2}^{2}I_{n})$ are independent. The model
was considered e.g.
in \cite[Section~5]{GDP12}.
It can be
expressed in the form (\ref{model})
with $ X=W $ and 
$ V=ZZ^{\prime} $.

To find  upper bounds for the ML degree and the REML degree of this model we will need the following

\begin{prop} \label{prop:matrices}
In the model (\ref{model}) with  $ X=W $ and $ V=ZZ^{\prime} $:

\begin{enumerate}[(a)]
\item The number of distinct eigenvalues of the matrix $ V $ does not exceed~$ q+1 $.
\item The number of distinct eigenvalues of the matrix $ BVB^{\prime} $ does not exceed~$ q $.
\end{enumerate}
\end{prop}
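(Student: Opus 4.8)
The plan is to reduce both statements to rank computations, exploiting the elementary fact that for a symmetric nonnegative definite matrix the number of distinct eigenvalues is at most its rank plus one, the extra unit accounting for a possible zero eigenvalue.

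For part (a), I would start from $V = ZZ^{\prime}$, so that $\rank(V) = \rank(ZZ^{\prime}) = \rank(Z)$. The columns of $Z$ are the indicator vectors $\mathbf{1}_{n_{1}}, \ldots, \mathbf{1}_{n_{q}}$ placed on pairwise disjoint index sets, hence they are linearly independent and $\rank(Z) = q$. Consequently $V$ has exactly $q$ nonzero eigenvalues counted with multiplicity, hence at most $q$ distinct nonzero eigenvalues; since $\rank(V) = q < n$, zero is also an eigenvalue, and the number of distinct eigenvalues is at most $q+1$.

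For part (b), I would write $BVB^{\prime} = (BZ)(BZ)^{\prime}$, so that $\rank(BVB^{\prime}) = \rank(BZ) = \rank(Z^{\prime}B^{\prime}BZ) = \rank(Z^{\prime}MZ)$, where the last equality uses $B^{\prime}B = M$ from (\ref{eq:B}). The decisive step -- and the only place where the hypothesis $\mathbf{1}_{n} \in \mathcal{M}(W)$ enters -- is to show that $Z^{\prime}MZ$ is singular. Since the columns of $Z$ sum to $\mathbf{1}_{n}$, we have $Z\mathbf{1}_{q} = \mathbf{1}_{n} \in \mathcal{M}(W) = \mathcal{M}(X)$, whence $MZ\mathbf{1}_{q} = M\mathbf{1}_{n} = \mathbf{0}_{n}$ and therefore $Z^{\prime}MZ\mathbf{1}_{q} = \mathbf{0}_{q}$. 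Thus $\mathbf{1}_{q}$ lies in the kernel of $Z^{\prime}MZ$, giving $\rank(Z^{\prime}MZ) \le q-1$, so $BVB^{\prime}$ has at most $q-1$ distinct nonzero eigenvalues. Because the standing assumption (\ref{eq:nu}) guarantees that zero is an eigenvalue of $BVB^{\prime}$, the total number of distinct eigenvalues does not exceed $q$.

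I expect the rank drop in part (b) to be the crux; everything else is routine bookkeeping (linear independence of the columns of $Z$, the identity $\rank(AA^{\prime}) = \rank(A)$, and the verification that zero genuinely occurs as an eigenvalue). The geometric content is simply that $M$ annihilates the one-dimensional subspace spanned by $\mathbf{1}_{n}$, which sits inside $\mathcal{M}(Z) = \mathcal{M}(V)$, thereby removing exactly one eigendirection in the passage from $V$ to $BVB^{\prime}$. I would also record, as trivial edge checks, that each $n_{k} \ge 1$ so that $\rank(Z) = q$ in part (a).
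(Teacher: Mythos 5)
Your proof is correct and takes essentially the same route as the paper: part (a) via $\rank(V)=\rank(ZZ^{\prime})=\rank(Z)=q$ together with the possible zero eigenvalue, and part (b) via the rank drop $\rank(BVB^{\prime})=\rank(MZ)\le q-1$, which occurs precisely because $M$ annihilates $\mathbf{1}_{n}=Z\mathbf{1}_{q}\in\mathcal{M}(W)$. The only cosmetic difference is that you work with $Z^{\prime}MZ$ and exhibit the kernel vector $\mathbf{1}_{q}$ explicitly, whereas the paper states $\rank(MZ)<\rank(Z)$ without spelling out this witness.
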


\begin{proof}
The proof of the part (a) follows from the equalities
\[ rank(V)=rank(ZZ^{\prime})=rank(Z)=q. \]
For the proof of (b) it is sufficient to note that
$ MVM=\sum\limits_{i=0}^{d}m_{i}B^{\prime}E_{i}B,
$
where $ m_{i} $ and $ E_{i} $, are as in  (\ref{eq:BVBprime}), see \cite[p.~285]{GSTU02},
and
\[ 
rank(BVB^{\prime})=rank(MVM)=rank(MZ)<rank(Z)=q.
\] 
\end{proof}

Now we are ready to state the following
\begin{thm} \label{Th:ConjectureML} 

\begin{enumerate}[(a)]
Consider the model (\ref{model})
with $ X=W $ and $ V=ZZ^{\prime} $.
\item If the condition (\ref{cond:ML_prob}) is satisfied, then
the ML degree of the model 
does not exceed $3q-3$.
\item 
If the condition (\ref{cond:REML_P1}) is satisfied, then
the REML degree of the model
 does not exceed $2q-3$.
\end{enumerate}

\end{thm}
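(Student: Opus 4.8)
The plan is to derive both bounds directly from the general degree estimates of Sections~\ref{sec:ML_Degree} and \ref{sec:REML_Degree}, feeding into them the eigenvalue counts supplied by Proposition~\ref{prop:matrices}. By construction the model (\ref{eq:OneWay}) is the special case of (\ref{model}) obtained by taking $X=W$ and $V=ZZ^{\prime}$, so Theorems~\ref{thm:MLdegree} and \ref{thm:REMLDegree} apply once I verify that their hypotheses, namely the relevant existence-with-probability-one conditions, are in force; these are exactly conditions (\ref{cond:ML_prob}) and (\ref{cond:REML_P1}) assumed in parts (a) and (b).

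For part (b) I would proceed as follows. Under (\ref{cond:REML_P1}) the equivalence of (\ref{cond:REML_P1}) with (\ref{eq:nu}) gives $\nu_{d}>0$, so the spectral setup of (\ref{eq:BVBprime})--(\ref{eq:Ti}) is legitimate and Theorem~\ref{thm:REMLDegree} bounds the REML degree by $2d-3$. It then remains to control $d$, the number of distinct eigenvalues of $BVB^{\prime}$. Proposition~\ref{prop:matrices}(b) yields $d\le q$, and substituting this into the bound gives $2d-3\le 2q-3$, as claimed.

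For part (a) the argument is parallel but uses both halves of Proposition~\ref{prop:matrices}. Condition (\ref{cond:ML_prob}) implies (\ref{cond:REML_P1}), hence $\nu_{d}>0$, so the hypotheses of Theorem~\ref{thm:MLdegree} hold and the ML degree is at most $2d+d_{0}-4$, where $d_{0}$ is the number of distinct eigenvalues of $V$. From Proposition~\ref{prop:matrices} we have $d\le q$ and $d_{0}\le q+1$, whence
\[
2d+d_{0}-4\le 2q+(q+1)-4=3q-3,
\]
which is the required bound.

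I do not anticipate a genuine difficulty here: the content of the result is already carried by the degree bounds of Theorems~\ref{thm:MLdegree} and \ref{thm:REMLDegree} together with the rank computations behind Proposition~\ref{prop:matrices}. The only point deserving care is the bookkeeping that ensures, in each part, that the assumed existence condition forces $\nu_{d}>0$, so that the quantities $d$, $d_{0}$, $m_{i}$ and $E_{i}$ are well defined before the degree bounds are invoked.
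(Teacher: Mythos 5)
Your proposal is correct and follows exactly the paper's own route: the paper likewise deduces part (a) from Theorem~\ref{thm:MLdegree} together with Proposition~\ref{prop:matrices} (via $2d+d_{0}-4\le 2q+(q+1)-4=3q-3$) and part (b) from Theorem~\ref{thm:REMLDegree} together with Proposition~\ref{prop:matrices} (via $2d-3\le 2q-3$). Your additional bookkeeping that the assumed existence conditions force $\nu_{d}>0$ is a point the paper's one-line proof leaves implicit (it is assumed throughout via (\ref{eq:nu})), but it does not change the argument.
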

\begin{proof}
The part (a) of the theorem is a consequence of Theorem \ref{thm:MLdegree} 
and Proposition \ref{prop:matrices} while the  part (b) follows from Theorem \ref{thm:REMLDegree} and  Proposition \ref{prop:matrices}.
\end{proof}

We have thus proved
that both parts of Conjecture~1 in the paper of Gross \textit{et al.} \cite{GDP12}, which 
concerns the maximum likelihood degree and the REML degree of 
the model (\ref{eq:OneWay}),
are true under fairly mild conditions.

\begin{remk}
The condition (\ref{cond:ML_prob}) as well
as the condition (\ref{cond:REML_P1}) are satisfied if $n_{k}>1$ for some $k$. The above theorem can be thus considered as a generalization of the results concerning the bounds for the ML degree and the REML degree for the one-way random effects model presented in \cite[p.~996]{GDP12}. 
\end{remk}

\section{Conclusion} \label{sec:Conclusion}
The recent research confirms that the likelihood function and the REML likelihood function may have multiple local maxima, see e.g.
\cite[pp.~2296--2297]{LBH15}
or \cite[Section~7]{GDP12}. The results obtained in this paper indicate that the approach proposed in the paper of Gross \textit{et al.} \cite{GDP12},
in which all critical points of the log-likelihood function
are found by solving a system of algebraic equations, may prove to be efficient for linear mixed models with two variance components.

\section*{Acknowledgements}
I would like to thank
Elizabeth Gross and
Mathias Drton for their help.

\end{document}